\newtheorem{thm}{Theorem}[section]
\newtheorem*{thm*}{Theorem}
\newtheorem{cor}[thm]{Corollary}
\newtheorem{prop}[thm]{Proposition}
\newtheorem*{prop*}{Proposition}
\newtheorem*{conj*}{Conjecture}
\newtheorem*{dfn*}{Definition}
\theoremstyle{definition}
\newtheorem{rem}[thm]{\textbf{Remark}}
\newtheorem*{rmk*}{Remark}
\newtheorem*{fact*}{Fact}
\theoremstyle{proof}
\newcommand{\vol}{\textrm{Vol}}
\newcommand{\norm}[1]{\left\Vert#1\right\Vert}
\newcommand{\abs}[1]{\left\vert#1\right\vert}
\newcommand{\set}[1]{\left\{#1\right\}}
\newcommand{\brac}[1]{\left(#1\right)}
\newcommand{\scalar}[1]{\left \langle #1 \right \rangle}
\newcommand{\Real}{\mathbb{R}}
\newcommand{\volrad}{{\rm volrad}}
\renewcommand{\det}{{\rm det}}
\newcommand{\Cov}{{\rm Cov}}
\newcommand{\diam}{{\rm diam}}
\begin{document}

\title{On the mean-width of isotropic convex bodies and their associated $L_p$-centroid bodies}

\author{Emanuel Milman\textsuperscript{1}}

\footnotetext[1]{Department of Mathematics, Technion - Israel
Institute of Technology, Haifa 32000, Israel. Supported by ISF (grant no. 900/10), BSF (grant no. 2010288), Marie-Curie Actions (grant no. PCIG10-GA-2011-304066) and the E. and J. Bishop Research Fund. Email: emilman@tx.technion.ac.il.}

\date{}
\maketitle

\begin{abstract}
For any origin-symmetric convex body $K$ in $\Real^n$ in isotropic position, we obtain the bound:
\[
M^*(K) \leq C \sqrt{n} \log(n)^2 L_K ~,
\]
where $M^*(K)$ denotes (half) the mean-width of $K$, $L_K$ is the isotropic constant of $K$, and $C>0$ is a universal constant. This improves the previous best-known estimate $M^*(K) \leq C n^{3/4} L_K$. Up to the power of the $\log(n)$ term and the $L_K$ one, the improved bound is best possible, and implies that the isotropic position is (up to the $L_K$ term) an almost $2$-regular $M$-position. The bound extends to any arbitrary position, depending on a certain weighted average of the eigenvalues of the covariance matrix. Furthermore, the bound applies to the mean-width of $L_p$-centroid bodies, extending a sharp upper bound of Paouris for $1 \leq p \leq \sqrt{n}$ to an almost-sharp bound for an arbitrary $p \geq \sqrt{n}$. The question of whether it is possible to remove the $L_K$ term from the new bound is essentially equivalent to the Slicing Problem, to within logarithmic factors in $n$.  
\end{abstract}

\section{Introduction}

Throughout this work we work in Euclidean space $(\Real^n,\scalar{\cdot,\cdot})$. A convex body $K$ in $\Real^n$ is a compact convex set with non-empty interior, and the uniform probability measure on $K$ is denoted by $\lambda_K$. More generally, it is very useful to consider the larger class of log-concave probability measures $\mu$ on $\Real^n$, consisting of absolutely continuous probability measures having density $f_\mu$ of the form $\exp(-V)$ with $V : \Real^n \rightarrow \Real \cup \set{+\infty}$ convex. We denote by $\Cov(\mu)$ the covariance matrix of $\mu$, given by $\Cov(\mu) := \int x \otimes x \; d\mu(x) - \int x \; d\mu(x) \otimes \int x \; d\mu(x)$. We will say that $\mu$ is isotropic if its barycenter is at the origin and $\Cov(\mu)$ is the identity matrix $Id$. We will say that a convex body $K$ is isotropic if $K$ has volume one and $\lambda_{K / L_K}$ is isotropic for an appropriate constant $L_K > 0$, i.e. if its barycenter is at the origin and $\Cov(\lambda_K) = L_K^2 Id$. It is easy to see that by applying an affine transformation, any convex body may be brought to isotropic ``position", which is unique up to orthogonal transformations \cite{Milman-Pajor-LK}; the isotropic constant $L_K$ is thus an affine invariant associated to any convex body $K$. See Bourgain \cite{BourgainMaximalFunctionsOnConvexBodies,Bourgain-LK} and Milman--Pajor \cite{Milman-Pajor-LK} 
for background on the yet unresolved Slicing Problem, which is concerned with obtaining a dimension independent upper-bound on $L_K$. The current best-known estimate $L_K \leq C n^{1/4}$ is due to B. Klartag \cite{KlartagPerturbationsWithBoundedLK}, who improved the previous estimate $L_K \leq C n^{1/4} \log(n)$ of J. Bourgain \cite{Bourgain-LK} (see also Klartag--Milman \cite{KlartagEMilmanLowerBoundsOnZp} and Vritsiou \cite{Vritsiou-ExtendingKM} for subsequent refinements).  Throughout this work, all constants $c, C, C', \ldots$ denote positive dimension-independent numeric constants, whose value may change from one occurrence to the next. We write $A \simeq B$ to denote that $c \leq A/B \leq C$ for some numeric constants $c,C >0$. 

The $L_p$-centroid bodies of a given convex body $K$ were introduced by E. Lutwak and G. Zhang in
\cite{LutwakZhang-IntroduceLqCentroidBodies} (under different normalization). More generally, given a probability measure $\mu$ (having full-dimensional support) and $p \geq 1$, define: 
\[
h_{Z_p(\mu)}(\theta) = \left( \int_{\Real^n} \abs{\scalar{x,\theta}}^p d\mu(x) \right)^{\frac{1}{p}} \quad , \quad \theta \in \Real^n ~ .
\]
The function $\theta \mapsto h_{Z_p(\mu)}(\theta)$ is a norm on $\Real^n$, and is thus the
supporting functional of an origin-symmetric convex body $Z_p(\mu) \subseteq \Real^n$ called the $L_p$-centroid body associated to $\mu$. Note that $\mu$ is isotropic iff its barycenter is at the origin and $Z_2(\mu) = B_2^n$, the Euclidean unit-ball. For a log-concave probability measure $\mu$, we also have:
\begin{equation} \label{eq:Borell}
1 \leq p \leq q \;\;\; \Rightarrow \;\;\; Z_p(\mu) \subset Z_q(\mu) \subset C \frac{q}{p} Z_p(\mu) ~;
\end{equation}
the first inclusion follows immediately from Jensen's inequality, and the second is essentially due to Berwald \cite{BerwaldMomentComparison} and may be deduced as a consequence of Borell's lemma \cite{Borell-logconcave}, see e.g. \cite{Milman-Pajor-LK,Paouris-IsotropicTail}.

The (half) mean-width $M^*(K)$ of a convex body $K$ containing the origin is defined as:
\[
M^*(K) := \int_{S^{n-1}} h_K(\theta) d\lambda_{S^{n-1}}(\theta) ~,
\]
where $h_K(\theta) = \sup \set{\scalar{\theta,x} \; ; \; x \in K}$ is the supporting functional of $K$, $S^{n-1}$ denotes the unit Euclidean sphere and $\lambda_{S^{n-1}}$ denotes the Haar probability measure on $S^{n-1}$. 
When $K$ is in addition assumed origin-symmetric, we denote by $\norm{\cdot}_K$ the norm on $\Real^n$ whose unit-ball is $K$, and the associated normed space $(\Real^n,\norm{\cdot}_K)$ is denoted $X_K$. It was shown by T. Figiel and N. Tomczak--Jaegermann \cite{l-position} that in this case, there exists a Euclidean structure on $\Real^n$ so that $M^*(K) M^*(K^\circ) \leq C Rad(X_K)$, where $K^\circ$ is the polar body to $K$, i.e. the unit-ball of the dual norm $\norm{\cdot}_K^* = h_K$, and $Rad(X)$ denotes the norm of the Rademacher projection on $L^2(X)$ (see \cite{Pisier-Book} for more details). Equivalently, we may fix the Euclidean structure and consider linear images (``positions") of $K$. A remarkable estimate of G. Pisier \cite{Pisier-Type-Implies-K-Convex,Pisier-Book} asserts that $Rad(X) \leq C \log(n)$ for all $n$-dimensional normed spaces, thereby implying the existence of a position of $K$ so that $M^*(K) M^*(K^\circ) \leq C \log(n)$. In particular, since $M^*(K) \geq \volrad(K)$ and $M^*(K^\circ) \geq 1/\volrad(K)$ by the Urysohn and Jensen inequalities, respectively \cite{GiannopoulosMilmanHandbook}, it follows that in the minimal mean-width position of $K$ having unit volume, one has:
\[
M^*(K) \leq C \sqrt{n} Rad(X_K) \leq C' \sqrt{n} \log(n) ~.
\]
Here we denote $\volrad(A) = (\vol(A) / \vol(B_2^m))^{1/m}$, the volume-radius of a Borel set $A \subset \Real^n$ having $m$-dimensional linear hull $E$, with $\vol$ denoting the induced $m$-dimensional Lebesgue measure on $E$. An elementary computation verifies that $\vol(B_2^m)^{1/m} \simeq 1 /\sqrt{m} $. 

\subsection{Mean Width In Isotropic Position}

It is nevertheless interesting to check whether other known positions enjoy the same upper-bound on their mean-widths (see e.g. \cite{KlartagEMilman-2-convex,GiannopoulosPajorPaourisPsi2,GPV-ImprovedPsi2} for applications). Our first result asserts that up to the isotropic constant and a logarithmic factor in the dimension, this is indeed the case in the isotropic position:

\begin{thm} \label{thm:main1}
Let $K$ denote an origin-symmetric isotropic convex body in $\Real^n$. Then:
\[
M^*(K) \leq C \sqrt{n} Rad(X_K) \log(1+n) L_K \leq C' \sqrt{n} \log(1+n)^2 L_K ~.
\]
\end{thm}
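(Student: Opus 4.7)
The plan is to pass through the $L_p$-centroid bodies of the isotropic log-concave probability measure $\tilde\mu$ canonically associated with $\tilde K := K/L_K$ (so that $\tilde\mu$ has density $L_K^n \mathbf{1}_{\tilde K}$ and $\Cov(\tilde\mu) = \mathrm{Id}$), and to control their mean-widths all the way up to $p \simeq n$. First I would reduce to the $L_n$-centroid body: since $K$ is origin-symmetric, letting $q \to \infty$ in \eqref{eq:Borell} yields $h_{\tilde K}(\theta) = \lim_{q\to\infty} h_{Z_q(\tilde\mu)}(\theta) \leq C h_{Z_n(\tilde\mu)}(\theta)$, equivalently $\tilde K \subset C Z_n(\tilde\mu)$. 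Integrating this pointwise bound over $S^{n-1}$ gives
\[
M^*(K) \;=\; L_K M^*(\tilde K) \;\leq\; C L_K M^*(Z_n(\tilde\mu)),
\]
so it suffices to prove $M^*(Z_n(\tilde\mu)) \leq C \sqrt n \, Rad(X_K) \log(1+n)$.

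The main ingredient is an extension of Paouris's sharp estimate $M^*(Z_p(\tilde\mu)) \leq C\sqrt p$ (valid for $1 \leq p \leq \sqrt n$) to the range $\sqrt n \leq p \leq n$ in the form $M^*(Z_p(\tilde\mu)) \leq C\sqrt p \cdot Rad(X_K) \log(1+n)$. Since $Z_2(\tilde\mu) = B_2^n$ while, in the limit, $Z_\infty(\tilde\mu) = \tilde K$, the Banach space that ought to govern the K-convexity behaviour of the interpolating family $\{Z_p(\tilde\mu)\}_{p \geq 2}$ is $X_K$ itself. My plan is to climb dyadically from $p = \sqrt n$ up to $p = n$, comparing $Z_{2p}(\tilde\mu)$ with $Z_p(\tilde\mu)$ via the two-sided inclusion \eqref{eq:Borell} at each step, and applying Pisier's K-convexity inequality once globally (in the Figiel--Tomczak-Jaegermann $\ell$-position framework) so that only a \emph{single} factor of $Rad(X_K)$ is incurred. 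The additional $\log(1+n)$ counts the $O(\log n)$ doubling steps traversed between $\sqrt n$ and $n$.

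Taking $p = n$ in this extension and combining with the reduction yields the first claimed inequality $M^*(K) \leq C \sqrt n \cdot Rad(X_K) \log(1+n) \cdot L_K$, and the second inequality of the theorem follows immediately from Pisier's universal bound $Rad(X) \leq C \log(1+\dim X)$. The chief difficulty lies in the extension step: one must engineer the iteration so that the losses accumulate \emph{additively} in $\log n$ rather than multiplicatively (which would produce an unacceptable $Rad(X_K)^{\log n}$), and one must arrange that the K-convexity constant appearing at every scale is that of $X_K$ itself rather than the a priori larger one of $X_{Z_p(\tilde\mu)}$ (otherwise the bound would be only tautological, since $Z_p$ is the very object being controlled). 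Once Step 2 is in hand, the reduction to $Z_n(\tilde\mu)$ and the final invocation of Pisier's theorem are routine.
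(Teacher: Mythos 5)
Your opening reduction is fine and matches the paper: since $K$ is origin-symmetric, $Z_n(\lambda_K) \simeq \mathrm{conv}(K \cup -K) = K$, so $M^*(K) \simeq L_K M^*(Z_n(\lambda_{K/L_K}))$ and it suffices to bound $M^*(Z_n)$. The difficulty is entirely in what you call Step~2, and there your plan has a genuine gap rather than a different but complete argument.

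The dyadic climb from $p=\sqrt n$ to $p=n$ does not yield additive losses by any mechanism you describe. The inclusion \eqref{eq:Borell} gives $Z_{2p}(\mu) \subset 2C\, Z_p(\mu)$, hence $M^*(Z_{2p}) \leq 2C\, M^*(Z_p)$ --- a fixed multiplicative loss per step, so $\Theta(\log n)$ steps would cost $(2C)^{\Theta(\log n)} = n^{\Theta(1)}$, destroying the bound. You correctly identify that this must somehow be made additive, but the device you invoke --- applying Pisier's K-convexity ``once globally'' across the family $\{Z_p\}$ --- is not a known tool and I don't see how it could be made precise. Pisier's theorem controls $M^*(L)\,M^*(L^\circ)$ only after a specific linear change of Euclidean structure ($\ell$-position), which you are not free to impose: the whole point of Theorem~\ref{thm:main1} is that the position is fixed (isotropic). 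Nor does the estimate say anything directly about comparing $M^*(Z_{2p})$ to $M^*(Z_p)$. So the iteration does not have a worked-out inductive step, and ``the losses accumulate additively'' remains an aspiration rather than an argument.

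The paper avoids this problem altogether by working on a different axis. Rather than climbing in the parameter $p$, it fixes $p$ and decomposes the mean-width over projection dimension $k$ via the Milman--Pisier refinement of Dudley's bound (Theorem~\ref{thm:MilmanPisier}): $\sqrt n\, M^*(Z_p(\mu)) \leq C \sum_{k=1}^n k^{-1/2} Rad_k(Z_p(\mu))\, v_k(Z_p(\mu))$. The volume-radius terms $v_k$ are then controlled by the Paouris/Klartag estimates (Theorem~\ref{thm:Zn}, Corollary~\ref{cor:Zp}, compiled in Proposition~\ref{prop:main}), giving $v_k(Z_p(\mu)) \lesssim \max(p/\sqrt k, \sqrt p)\cdot \det\Cov(\pi_F\mu)^{1/2k}$, and the resulting sum over $k$ produces the $\log$ factor harmlessly while $Rad_k \leq Rad(X_{Z_p(\mu)})$ yields a \emph{single} K-convexity factor for free, by the quotient property --- no positioning trick needed. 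The crucial point your proposal misses is that the volume radius of a projection (unlike the mean-width) is an affine invariant that Paouris's theory bounds sharply in \emph{any} position, which is what makes the decomposition by projection dimension so much more tractable than a decomposition by moment index $p$.
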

Up to the $Rad(X_K) \log(1+n) L_K$ term, this bound is best possible, since by Urysohn's inequality $M^*(K) \geq \volrad(K) \simeq \sqrt{n}$. The optimality of the $L_K$ term in this bound is actually intimately connected to the Slicing Problem: removing it would imply a vast improvement over Klartag's best-known bound on the isotropic constant, namely:
\begin{eqnarray*}
\forall n\geq 1 \;\; \forall \text{ isotropic convex } K \subset \Real^n  \;\;\; M^*(K) \leq C \sqrt{n} Rad(X_K) \log(1+n) \;\;\; \Rightarrow \\
\forall n\geq 1 \;\; \forall \text{ convex } K \subset \Real^n \;\;\; L_K \leq \inf_{\lambda \in (0,1]} C^{1/\lambda} (Rad(X_K) \log(1+n))^{1+\lambda} ~;
\end{eqnarray*}
see Proposition \ref{prop:Lk} and the subsequent remark. Note that always $L_K \geq L_{B_2^n} \geq c > 0$ \cite{Milman-Pajor-LK}. As for the $Rad(X_K)$ term, its presence is natural and expected 
just as in the minimal mean-width position, as easily witnessed by testing $K = \tilde{B_1^n}$, the unit-volume homothetic copy of the unit-ball of $\ell_1^n$; indeed, $M^*(\tilde{B_1^n}) \simeq \sqrt{n} \sqrt{\log(1+n)}$, $L_{\tilde{B_1^n}} \simeq 1$ and $Rad(\ell_1^n) \simeq \sqrt{\log (1+n)}$ \cite{Milman-Schechtman-Book}. So some logarithmic dependence in $n$ must ultimately be present and cannot be completely disposed of. The additional $\log(1+n)$ term is probably non-optimal. 

The previous best-known upper-bound on the mean-width of an isotropic convex body was $M^*(K) \leq C n^{3/4} L_K$. This was first shown by M. Hartzoulaki in her Ph.D. Thesis \cite{Hartzoulaki-PhD}, by establishing that the isotropic position is a (one-sided) $1$-regular M-position (up to a factor of $L_K$), and employing Dudley's entropy estimate as in \cite{GiannopoulosMilmanMeanWidth} (see below). Other subsequent proofs include that by P. Pivovarov, who employed an approach involving random polytopes \cite{Pivovarov-IsotropicMeanWidth}. As noticed in \cite{GreekBook}, this bound is also an immediate consequence of the following sharp (up to constants) estimate of G. Paouris, valid for an arbitrary isotropic log-concave probability measure $\mu$ on $\Real^n$:
\begin{equation} \label{eq:Paouris-M*}
p \in [1,\sqrt{n}] \;\;\; \Rightarrow \;\;\; M^*(Z_p(\mu)) \leq C \sqrt{p} ~;
\end{equation}
(in fact, Paouris shows this for all $p \leq q^*(\mu)$, which is equivalent to requiring that the diameter $\diam(Z_p(\mu)) \leq c \sqrt{n}$ for an appropriately small constant $c>0$, see e.g. \cite[Section 4]{KlartagEMilmanLowerBoundsOnZp}). Indeed, by (\ref{eq:Borell}) we have
$M^*(Z_n(\mu)) \leq C \frac{n}{\sqrt{n}} M^*(Z_{\sqrt{n}}(\mu)) \leq C n^{3/4}$. It remains to note that $Z_n(\lambda_K) \simeq conv(K \cup -K)$ as an easy corollary of the Brunn--Minkowski inequality (e.g. \cite{GreekBook}).
It immediately follows that for an origin-symmetric isotropic convex body $K$:
\[
 M^*(K) \simeq L_K M^*(Z_n(\lambda_{K / L_K})) \leq C n^{3/4} L_K ~.
 \]

Our next result extends Theorem \ref{thm:main1} to an estimate on $M^*(Z_p(\mu))$ for all $p \geq 1$, thereby extending the estimate (\ref{eq:Paouris-M*}) to the range $p \geq \sqrt{n}$. Inspecting again the example of the uniform measure on $\tilde{B_1^n} / L_{\tilde{B_1^n}}$ illustrates that a logarithmic term must appear in the estimate as $p$ approaches $n$ (either directly or via the norm of the Rademacher projection), and this is indeed the case:
\begin{thm} \label{thm:main2}
Let $\mu$ denote an isotropic probability measure on $\Real^n$. Then for all $p \geq 1$:
\[
M^*(Z_p(\mu)) \leq C Rad(X_{Z_p(\mu)}) \max\brac{\frac{p \log(1+p)}{\sqrt{n}} , \sqrt{p}} ~.
\]
\end{thm}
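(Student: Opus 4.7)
\emph{Plan.} The argument splits into two regimes according to the value of $p$ (with $\mu$ understood to be log-concave, as is standard in this setting).

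In the regime $1 \leq p \leq \sqrt{n}$, the claim is immediate from Paouris's sharp estimate (\ref{eq:Paouris-M*}), which gives $M^*(Z_p(\mu)) \leq C\sqrt{p}$. Since $Rad(X) \geq 1$ for every normed space $X$, this matches the $\sqrt{p}$-branch of the maximum in the theorem statement.

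The substantive case is $p \geq \sqrt{n}$, where the target bound is $M^*(Z_p(\mu)) \leq C\, Rad(X_{Z_p(\mu)})\, p\log(1+p)/\sqrt{n}$. At the endpoint $p=n$, this reduces to Theorem~\ref{thm:main1} via the identification $Z_n(\lambda_{K/L_K}) \simeq K/L_K$. My plan is to extend the mechanism behind Theorem~\ref{thm:main1} -- namely, that the isotropic position is an approximate $2$-regular $M$-position up to a factor of $L_K$ -- to every $p \geq \sqrt{n}$, now with $Z_p(\mu)$ playing the role of $K$. Concretely, I would establish an entropy estimate of roughly the form
\[
\log N\bigl(Z_p(\mu),\, t\, B_2^n\bigr) \,\leq\, C\, Rad(X_{Z_p(\mu)})^2 \, \frac{p^2}{t^2}, \qquad t \in \bigl[c\sqrt{p},\, Cp\bigr],
\]
with the left endpoint coming from the Klartag--E.~Milman volume-radius lower bound $\volrad(Z_p(\mu)) \geq c\sqrt{p}$ \cite{KlartagEMilmanLowerBoundsOnZp}, and the right endpoint from the diameter bound $\diam(Z_p(\mu)) \leq Cp$, a direct consequence of Borell's lemma applied to the isotropic log-concave measure $\mu$. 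Feeding this estimate into Dudley's entropy integral, together with the trivial volume-based covering in the low range $t \leq c\sqrt{p}$, produces
\[
\sqrt{n}\, M^*(Z_p(\mu)) \,\leq\, C\, Rad(X_{Z_p(\mu)})\, p \log(1+p) \,+\, C\, \sqrt{n}\,\sqrt{p},
\]
which after dividing by $\sqrt{n}$ is exactly the two branches of the maximum in the theorem.

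\emph{Main obstacle.} The entropy estimate itself is the crux. A natural avenue is to exploit the isometric embedding $X_{Z_p(\mu)^\circ} \hookrightarrow L^p(\mu)$ given by $x \mapsto \sscalar{\cdot,x}$, which places the dual of $X_{Z_p(\mu)}$ inside an $L^p$-space, and then apply a Pisier-style $K$-convexity/dual-Sudakov inequality in that setting -- the appearance of $Rad(X_{Z_p(\mu)}) = K(X_{Z_p(\mu)})$ is precisely the output of such an estimate. Obtaining the correct quadratic scaling $p^2/t^2$ (as opposed to the weaker Paouris-type scaling $p/t$), while at the same time avoiding logarithmic losses in $n/p$, is the delicate point; it presumably demands a careful dyadic decomposition of the scales $t$ guided by the Borell inclusion chain~(\ref{eq:Borell}).
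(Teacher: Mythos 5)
Your plan correctly identifies the two regimes and the target shape of the estimate, and the endpoint observations (Paouris for $p\le\sqrt n$, diameter $\le Cp$ from~(\ref{eq:Borell}), the Klartag--Milman lower volume bound) are all accurate. But the heart of your argument --- the entropy estimate
\[
\log N\bigl(Z_p(\mu),\, t\, B_2^n\bigr) \,\leq\, C\, Rad(X_{Z_p(\mu)})^2 \, \frac{p^2}{t^2}
\]
for $t\in[c\sqrt p, Cp]$ --- is left unproved, and the route you gesture at does not close it. That inequality is a statement that $Z_p(\mu)$ is in an (almost) $2$-regular $M$-position, which is precisely what the paper \emph{deduces} from its mean-width bound via Sudakov's inequality~(\ref{eq:Sudakov}), not what it starts from. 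Using Sudakov or a dual-Sudakov inequality to get the entropy estimate is circular here, because those inequalities bound covering numbers by the very quantity $M^*$ you are trying to control. The ``Pisier-style $K$-convexity'' approach does produce covering estimates of the right quadratic type, but only after placing the body in an appropriate position (the $\ell$- or minimal-mean-width position), whereas the whole difficulty of the theorem is that $Z_p(\mu)$ is handed to you in isotropic position.

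What the proposal is missing is the Milman--Pisier theorem~(\ref{eq:MilmanPisier}), which the author singles out as the main ingredient. That result is a refinement of Dudley's bound~(\ref{eq:Dudley}) in which the entropy numbers $e_k$ are replaced by the projection volume radii $v_k(K)=\sup_F\volrad(P_F K)$ (at the mild cost of the $Rad_k$ factors). The crucial gain is that $v_k(Z_p(\mu))$ \emph{can} be bounded directly, without any covering-number input, by combining $P_F Z_p(\mu)=Z_p(\pi_F\mu)$ with the Paouris/Klartag volume-radius estimates for $Z_q$ of a log-concave marginal (Theorem~\ref{thm:Zn}, Corollary~\ref{cor:Zp}, packaged as Proposition~\ref{prop:main}); summing the resulting bound $v_k\lesssim\sqrt{p/k}\max(\sqrt p,\sqrt k)$ over $k$ is what produces the $\max(p\log(1+p)/\sqrt n,\sqrt p)$. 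Your scheme, if it were to be carried out honestly, would have to re-derive these same $v_k$ bounds and then pass through a low-$M^*$/Carl's-theorem argument to get entropy numbers (this is how the paper proves its covering Theorem~\ref{thm:covering}) and only then feed them into Dudley --- a strictly longer detour that, in the end, rests on the Milman--Pisier machinery anyway. As written, the proposal does not constitute a proof: the key step is asserted, not established.
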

As explained above, setting $p=n$ and $\mu = \lambda_{K / L_K}$ in Theorem \ref{thm:main2} recovers Theorem \ref{thm:main1}. Up to the $Rad(X_{Z_p(\mu)})$ term, Theorem \ref{thm:main2} recovers the sharp Paouris bound (\ref{eq:Paouris-M*}) in the range $p \in [1,\sqrt{n}]$. Note that $Rad(X_{Z_p(\mu)}) \leq C \log(1 + \min(p,n))$, see Section \ref{sec:proof}. Using in addition (\ref{eq:Borell}), we summarize the currently best-known estimates:
\begin{equation} \label{eq:M*-summary}
M^*(Z_p(\mu)) \leq C \begin{cases}  \sqrt{p} & 1 \leq p \leq \sqrt{n} \\ n^{-1/4} p &  \sqrt{n} \leq p \leq \sqrt{n} \log^2(1+n) \\
\sqrt{p} \log(1+n) & \sqrt{n} \log^2(1+n) \leq p \leq n / \log^2 (1+n) \\ \frac{p}{\sqrt{n}} \log^2(1+n) & n / \log^2 (1+n) \leq p \leq n \end{cases} ~.
\end{equation}

\subsection{Mean Width In Arbitrary Position}

In fact, Theorems \ref{thm:main1} and \ref{thm:main2} are particular cases of our main result, which we now state in full generality:

\begin{thm} \label{thm:main}
Let $\mu$ denote a log-concave probability measure on $\Real^n$ with barycenter at the origin. Let $\lambda_1^2 \geq \ldots \geq \lambda_n^2 > 0$ denote the eigenvalues of $\Cov(\mu)$. Then for any $p \geq 1$:
\begin{eqnarray*}
M^*(Z_p(\mu)) & \leq & C Rad(X_{Z_p(\mu)}) \frac{1}{\sqrt{n}} \sum_{k=1}^n \max \brac{\sqrt{\frac{p}{k}},\frac{p}{k}} (\Pi_{i=1}^{k} \lambda_i)^{\frac{1}{k}} \\
& \simeq & C' Rad(X_{Z_p(\mu)}) \frac{1}{\sqrt{n}} \sum_{i=1}^n \max \brac{\sqrt{\frac{p}{i}},\frac{p}{i}} \lambda_i  ~.
\end{eqnarray*}
\end{thm}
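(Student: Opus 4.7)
The plan is to bound $M^*(Z_p(\mu))$ by a chaining-type sum over scales $k$ of volume radii of $k$-dimensional coordinate projections of $Z_p(\mu)$, estimating each such projection via Paouris's volume bound applied to the corresponding marginal of $\mu$. The factor $Rad(X_{Z_p(\mu)})$ enters as the natural cost of invoking Pisier's $K$-convexity at the critical scale of the decomposition.

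\textbf{Volume radii of projections.} Diagonalize $\Cov(\mu) = \mathrm{diag}(\lambda_1^2,\ldots,\lambda_n^2)$ by an orthogonal transformation (which preserves both $M^*$ and the eigenvalues), set $E_k = \mathrm{span}(e_1,\ldots,e_k)$, and let $\Lambda_k = \mathrm{diag}(\lambda_1,\ldots,\lambda_k)$. One has $P_{E_k} Z_p(\mu) = Z_p(\pi_{E_k *}\mu)$ directly from the definition of the support function, since for $\theta \in E_k$, $\scalar{x,\theta} = \scalar{P_{E_k}x,\theta}$. The marginal $\pi_{E_k *}\mu$ is log-concave on $\Real^k$ with covariance $\Lambda_k^2$, so $\tilde\mu_k := (\Lambda_k^{-1})_* (\pi_{E_k *}\mu)$ is isotropic and log-concave on $\Real^k$. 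Paouris's volume theorem yields $\volrad(Z_p(\tilde\mu_k)) \leq C\sqrt{p}$ for $1 \leq p \leq k$, and combined with $(\ref{eq:Borell})$ at $p = k$ it gives $\volrad(Z_p(\tilde\mu_k)) \leq Cp/\sqrt{k}$ for $p \geq k$. Since $Z_p(\pi_{E_k *}\mu) = \Lambda_k Z_p(\tilde\mu_k)$,
\[
\volrad(P_{E_k} Z_p(\mu)) \;=\; \sigma_k \,\volrad(Z_p(\tilde\mu_k)) \;\leq\; C \sigma_k \sqrt{k}\, \max\!\left(\sqrt{p/k},\, p/k\right),
\]
where $\sigma_k := (\Pi_{i=1}^k \lambda_i)^{1/k}$.

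\textbf{Chaining inequality and main obstacle.} The remaining ingredient is a bound of the form
\[
M^*(K) \;\leq\; C\, Rad(X_K)\, \frac{1}{\sqrt{n}} \sum_{k=1}^n \frac{\volrad(P_{E_k} K)}{\sqrt{k}}
\]
valid for any centrally symmetric convex body $K \subset \Real^n$ and a suitably chosen increasing chain $E_1 \subset \cdots \subset E_n$ of subspaces. Substituting the projection estimate above (the $\sqrt{k}$'s cancel against the $1/\sqrt{k}$ weights) immediately yields the theorem. Establishing this chaining inequality is the main technical obstacle. The natural strategy combines Dudley's entropy bound for $M^*(K)$ with entropy-number estimates via Milman's M-ellipsoid applied scale-by-scale (and the duality of entropy numbers of Artstein--Milman--Szarek), together with Pisier's MM*-estimate $M(K) M^*(K) \lesssim Rad(X_K)$ invoked at a single critical scale to extract the $Rad(X_K)$ factor. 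The delicate point is that a naive single-scale Dudley bound uses only the global $\volrad(K)$ and overshoots by a factor of $\sqrt{n}$; the refinement must replace this by $\volrad(P_{E_k} K)$ at scale $k$ without accumulating further losses beyond one multiplicative $Rad(X_K)$. The top-eigenspace chain is the natural choice for $K = Z_p(\mu)$, since it aligns the projections with the covariance ellipsoid $Z_2(\mu)$ and lets Paouris's isotropic bound apply cleanly after whitening.
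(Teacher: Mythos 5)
Your setup is on the right track: the volume-radius computation for the top-eigenspace projections of $Z_p(\mu)$ is carried out correctly (it matches the bound coming from the paper's Proposition \ref{prop:main} once one notes $\sqrt{k}\max(\sqrt{p/k},p/k) = \sqrt{p/k}\max(\sqrt{p},\sqrt{k})$, and the upper bound $\max_F \det\Cov(\pi_F\mu)^{1/2k} \leq (\Pi_{i=1}^k\lambda_i)^{1/k}$ for $F$ ranging over $G_{n,k}$ is attained on the top eigenspace you chose). However, the proof has a genuine gap: the ``chaining inequality''
\[
M^*(K) \leq C\,Rad(X_K)\,\frac{1}{\sqrt{n}} \sum_{k=1}^n \frac{1}{\sqrt{k}}\, \volrad(P_{E_k} K)
\]
is not proved, and you explicitly acknowledge this. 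The ingredients you propose --- Dudley's entropy bound, scale-by-scale use of Milman's $M$-ellipsoid, Artstein--Milman--Szarek entropy duality, and Pisier's $MM^*$-estimate --- do not assemble into this bound in any straightforward way: the naive Dudley route loses a $\sqrt{n}$, and iterating Milman's $M$-ellipsoid across $\log n$ scales accumulates further losses, while invoking $MM^*$ at only a ``single critical scale'' is not an argument but a wish. The missing ingredient is the Milman--Pisier theorem (Theorem \ref{thm:MilmanPisier} in the paper, from \cite{MilmanPisier-DudleyConjecture} and Chapter 9 of \cite{Pisier-Book}), which gives
\[
\sqrt{n}\,M^*(K) \leq C \sum_{k=1}^n \frac{1}{\sqrt{k}}\, Rad_k(K)\, v_k(K),
\qquad v_k(K) := \sup_{F\in G_{n,k}} \volrad(P_F K),
\]
and whose proof (via $K$-convexity and the quotient-of-subspace theorem) is substantially more involved than the sketch you offer. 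Once this theorem is in hand, the bound $Rad_k(Z_p(\mu)) \leq Rad(X_{Z_p(\mu)})$ (from self-duality and monotonicity of the Rademacher projection norm under quotients) and your projection volume estimates immediately yield the first inequality in the theorem; the second, ``$\simeq$'', form then follows by a short AM--GM manipulation and interchange of summations, which your write-up omits. In short, the architecture is sound, but the load-bearing wall --- the Milman--Pisier chaining inequality --- is left unbuilt.
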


Using $\lambda_i \equiv 1$ in the isotropic case, Theorem \ref{thm:main2} readily follows. 

\subsection{Covering Estimates}

Recall that given two convex bodies $K,L$ in $\Real^n$, the covering number $N(K,L)$ is the minimal number of translates of $L$ whose union covers $K$. It was shown by Hartzoulaki \cite{Hartzoulaki-PhD} that an isotropic convex body $K$ in $\Real^n$ is (up to the $L_K$ term) in a (one-sided) $1$-regular $M$-position (see \cite{Pisier-Book} for history and terminology),  namely:
\begin{equation} \label{eq:Hartzoulaki}
N(K, t \sqrt{n} B_2^n) \leq \exp\brac{C n \frac{L_K}{t}} \;\;\; \forall t > 0 ~.
\end{equation}
We can now improve this for $t \geq C Rad(X_K)^2 \log^2(1+Rad(X_K)) L_K$ by simply invoking Sudakov's inequality (e.g. \cite{Pisier-Book}):
\begin{equation} \label{eq:Sudakov}
N(K , t B_2^n) \leq \exp\brac{C n \frac{M^*(K)^2}{t^2}} \;\;\; \forall t > 0 ~.
\end{equation}
Indeed, coupled with the estimate on $M^*(K)$ from Theorem \ref{thm:main1}, (\ref{eq:Sudakov}) immediately implies that an origin-symmetric isotropic convex body $K$ is, up to the $Rad(X_K) \log(1+n) L_K$ term, in a (one-sided) $2$-regular $M$-position, namely:
\[
N(K, t \sqrt{n} B_2^n) \leq \exp \brac{C n \frac{Rad(X_K)^2 \log^2(1+n) L_K^2}{t^2}} \;\;\; \forall t > 0 ~.
\]
In fact, one can actually slightly refine this covering estimate as follows:
\begin{thm} \label{thm:covering}
For all $t \in [Rad(X_K) L_K, C \sqrt{n} L_K]$ we have:
\[
N(K , t \sqrt{n} B_2^n) \leq \exp\brac{C n  \frac{Rad(X_K)^2 L_K^2}{t^2} \log^2 \brac{1+\frac{t^2}{Rad(X_K)^2 L_K^2}}} ~.
\]
\end{thm}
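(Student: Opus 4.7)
The plan is to apply Sudakov's inequality (\ref{eq:Sudakov}) not to $K$ itself but to an $L_p$-centroid body, with $p$ tuned as a function of $t$, and to control its mean-width via Theorem \ref{thm:main2}. This will replace the crude factor $\log^2(1+n)$ appearing in the naive combination of (\ref{eq:Sudakov}) with Theorem \ref{thm:main1} by the refined $t$-dependent factor $\log^2(1+t^2/(Rad(X_K) L_K)^2)$ sought in the theorem.

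For symmetric isotropic $K$ of unit volume, write $\mu := \lambda_{K/L_K}$, which is isotropic. Using $K \simeq L_K Z_n(\mu)$ (Brunn--Minkowski) and the Berwald--Borell inclusion (\ref{eq:Borell}), one has $K \subseteq c (n/p) L_K Z_p(\mu)$ for any $1 \leq p \leq n$, and hence by Sudakov
\[
N(K,t\sqrt{n} B_2^n) \leq N\!\brac{Z_p(\mu), \frac{c p t}{\sqrt{n} L_K} B_2^n} \leq \exp\!\brac{C n^2 L_K^2 \frac{M^*(Z_p(\mu))^2}{p^2 t^2}}.
\]
For $p \geq \sqrt{n}$, Theorem \ref{thm:main2} yields $M^*(Z_p(\mu))^2 \leq C \, Rad(X_{Z_p(\mu)})^2 \, p^2 \log^2(1+p)/n$, producing
\[
N(K,t\sqrt{n} B_2^n) \leq \exp\!\brac{C n L_K^2 \, Rad(X_{Z_p(\mu)})^2 \frac{\log^2(1+p)}{t^2}}.
\]
Setting $p := t^2/(Rad(X_K) L_K)^2$---which lies in $[\sqrt{n}, n]$ exactly when $t \in [n^{1/4} Rad(X_K) L_K, \sqrt{n} \, Rad(X_K) L_K]$---converts $\log^2(1+p)$ into $\log^2(1+t^2/(Rad(X_K) L_K)^2)$ and recovers the claim in this range, modulo a $K$-convexity transfer $Rad(X_{Z_p(\mu)}) \leq C \, Rad(X_K)$, which should follow from the Banach--Mazur proximity $d_{BM}(X_{Z_p(\mu)}, X_K) \leq C n/p$ imparted by (\ref{eq:Borell}) together with $Z_n(\mu) \simeq K/L_K$.

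For the complementary range $t \in [Rad(X_K) L_K, n^{1/4} Rad(X_K) L_K]$, I would patch together two estimates: Hartzoulaki's bound (\ref{eq:Hartzoulaki}), which at the lower endpoint gives $\exp(C n/Rad(X_K)) \leq \exp(C' n)$ (since $Rad(X_K) \geq 1$), matching the refined bound $\exp(C n \log^2 2)$ up to universal constants; and the same $L_p$-centroid argument executed at the boundary value $p = \sqrt{n}$, which via the sharp Paouris estimate (\ref{eq:Paouris-M*}) matches the refined bound at $t = n^{1/4} Rad(X_K) L_K$. Intermediate values of $t$ can then be handled either by the elementary chain $N(K, t\sqrt{n} B_2^n) \leq N(K, s\sqrt{n} B_2^n) \cdot N(s\sqrt{n} B_2^n, t\sqrt{n} B_2^n)$ for a suitable intermediate $s$, or by direct interpolation. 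The principal obstacle is the $K$-convexity transfer step, since Theorem \ref{thm:main2} naturally carries $Rad(X_{Z_p(\mu)})$ whereas the refined claim mentions only $Rad(X_K)$; a secondary obstacle is ensuring smooth patching across $t \simeq n^{1/4} Rad(X_K) L_K$.
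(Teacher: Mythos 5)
Your proposal takes a genuinely different route from the paper's proof, but it has two real gaps that prevent it from establishing the theorem.

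\textbf{Gap 1: the $K$-convexity transfer fails.} You need $Rad(X_{Z_p(\mu)}) \leq C\,Rad(X_K)$, and you suggest deriving it from $d_{BM}(Z_p(\mu),Z_n(\mu)) \leq C n/p$ together with $Z_n(\mu) \simeq K/L_K$. But the norm of the Rademacher projection only transfers across Banach--Mazur distance linearly: if $\frac{1}{d}L \subset K \subset L$ then the two norms satisfy $\norm{\cdot}_K \leq \norm{\cdot}_L \leq d\norm{\cdot}_K$, whence $Rad(X_K) \leq d\, Rad(X_L)$, and no better. With $d = Cn/p$ (as large as $C\sqrt{n}$ when $p = \sqrt{n}$) this gives only $Rad(X_{Z_p(\mu)}) \leq C(n/p)\, Rad(X_K)$, which is useless. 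The alternative bound $Rad(X_{Z_p(\mu)}) \leq C\log(1+p)$ (from $Z_2(\mu) = B_2^n$ and Pisier's estimate) is what the paper actually uses elsewhere, but it degrades your final exponent to $\log^4$ in place of $Rad(X_K)^2\log^2$, which no longer matches the claim.

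\textbf{Gap 2: Sudakov at $Z_p$ cannot reach the small-$t$ range, and the proposed patching does not help.} Even granting the transfer, your main estimate $p := t^2/(Rad(X_K) L_K)^2 \geq \sqrt{n}$ requires $t \geq n^{1/4}\,Rad(X_K) L_K$. For $t$ just below this threshold (say $Rad(X_K) \simeq 1$ and $t = n^{1/4}L_K$), the theorem asserts $\exp(C\sqrt{n}\log^2 n)$, but your proposed fallbacks are far weaker: Hartzoulaki's bound (\ref{eq:Hartzoulaki}) gives $\exp(Cn^{3/4})$, and Sudakov applied to $Z_{\sqrt{n}}(\mu)$ with the sharp Paouris estimate gives only $\exp(Cn)$. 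The discrepancy grows worse as $t$ decreases toward $Rad(X_K)L_K$. The ``elementary chain'' $N(K,t\sqrt{n}B_2^n) \leq N(K,s\sqrt{n}B_2^n)\cdot N(s\sqrt{n}B_2^n,t\sqrt{n}B_2^n)$ will not rescue this, since $N(s\sqrt{n}B_2^n,t\sqrt{n}B_2^n) \geq (s/t)^n$ is exponentially large in $n$ unless $s$ and $t$ are comparable.

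The underlying issue is that Sudakov's inequality, even applied to a cleverly chosen smaller body, is a single-scale tool; it cannot by itself produce the $\log(1+n/k)$ gain that governs the regularity of entropy numbers across all scales $k$. The paper's proof works at the level of Gelfand numbers $c_k(K)$ through the Pajor--Tomczak-Jaegermann refinement of the low-$M^*$ estimate, and then transfers to entropy numbers $e_k(K)$ via Carl's theorem, which is precisely where the $\log(1+n/k)$ factor enters. This multi-scale machinery appears to be essential for the sharper $t$-dependent logarithm in Theorem \ref{thm:covering}.
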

Similar estimates are obtained for $L_p$-centroid bodies in Section \ref{sec:proof}. 

\subsection{Main Ingredient of Proof}

We denote by $G_{n,k}$ the Grassmann manifold of all $k$-dimensional linear subspaces of $\Real^n$ ($1 \leq k \leq n$), and given $F \in G_{n,k}$, we denote by $P_F$ the orthogonal projection onto $F$. Our main result is a rather elementary consequence of the following remarkable theorem of V. Milman and G. Pisier \cite{MilmanPisier-DudleyConjecture}, as exposed in \cite[Chapter 9]{Pisier-Book}, which does not seem to be as well-known as it rightfully should:
\begin{thm}[Milman--Pisier] \label{thm:MilmanPisier}
\begin{equation} \label{eq:MilmanPisier}
\sqrt{n} M^*(K) \leq C \sum_{k=1}^n \frac{1}{\sqrt{k}} Rad_k(K) v_k(K) ~,
\end{equation}
where:
\[
v_k(K) := \sup\set{ \volrad(P_F K) ; F \in G_{n,k} } ~,
\]
and:
\[
Rad_k(K) := \sup \set{Rad(X_{P_F K}) ; F \in G_{n,k}} ~.
\]
\end{thm}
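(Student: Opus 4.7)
The Milman--Pisier inequality refines Dudley's entropy bound by replacing the global Sudakov input with a sharper scale-by-scale estimate coming from Pisier's K-convexity applied to projections. Writing $\sqrt{n} M^*(K) \simeq \E \sup_{\theta \in K} \scalar{g,\theta}$ for a standard Gaussian $g \sim N(0,I_n)$ (and using $\E \abs{g} \simeq \sqrt{n}$), Dudley's chaining yields
\[
\sqrt{n} M^*(K) \leq C \int_0^{\diam(K)} \sqrt{\log N(K,t B_2^n)}\, dt \leq C' \sum_{m \geq 0} 2^{m/2} e_{2^m}(K),
\]
where $e_m(K) := \inf \set{t > 0 : N(K, t B_2^n) \leq 2^m}$ is the $m$-th dyadic entropy number. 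This reduces the theorem to establishing the key entropy estimate
\[
e_k(K) \leq C\, Rad_k(K)\, v_k(K), \qquad 1 \leq k \leq n . \quad (\star)
\]
Assuming $Rad_k(K) v_k(K)$ varies regularly over dyadic blocks (which follows from the definitions, as both suprema can only grow with $k$ up to constants), substituting $(\star)$ into the dyadic Dudley bound and converting back to a sum over all $k$ matches the RHS $\sum_{k=1}^n Rad_k(K) v_k(K)/\sqrt{k}$ of the theorem up to universal constants.

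\textbf{Step 2 (Key entropy bound via K-convexity).} To prove $(\star)$ at scale $k$, I would fix a subspace $F = F(k) \in G_{n,k}$ nearly attaining $v_k(K) = \sup \set{\volrad(P_F K) : F \in G_{n,k}}$, and pass to the $k$-dimensional symmetric body $L := P_F K \subset F$. By the Figiel--Tomczak-Jaegermann $MM^*$-inequality, there is a Euclidean structure on $F$ in which
\[
M(L)\, M^*(L) \leq C\, Rad(X_L) \leq C\, Rad_k(K) .
\]
Combined with Urysohn's inequality $M^*(L) \geq \volrad(L) = v_k(K)$, this gives $M(L) \leq C\, Rad_k(K) / v_k(K)$. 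The dual Sudakov (Pajor--Tomczak-Jaegermann) estimate $\log N(B_2^k, s L^\circ) \leq C k\, M(L)^2 / s^2$, evaluated at the right scale, then produces a covering of $L$ by $\exp(Ck)$ translates of $c\, Rad_k(K)\, v_k(K)\, B_2^k$. Lifting this $k$-dimensional cover back to a cover of $K$ by Euclidean balls in $\Real^n$ yields $(\star)$.

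\textbf{Step 3 and main obstacle.} Once $(\star)$ is proved, Step 1 immediately gives the theorem after summing the dyadic series and converting back to $\sum_{k=1}^{n} Rad_k(K) v_k(K)/\sqrt{k}$ by a routine comparison. The difficult part is the lift step at the end of Step 2: the K-convexity/$MM^*$ estimate is intrinsically a $k$-dimensional statement about $P_F K$, whereas the entropy $e_k(K)$ is about covering the full $n$-dimensional body $K$ by Euclidean balls in $\Real^n$. Bridging this dimension gap---either by an iterated-projection scheme that peels off $k$ dimensions at a time along a chain of nested subspaces and repeatedly applies the $MM^*$-inequality, or by invoking Pisier's direct K-convex Sudakov-type covering estimate which absorbs the passage $\Real^k \to \Real^n$ into the universal factors $Rad_k$ and $v_k$---is the technical heart of the Milman--Pisier argument and the main obstacle.
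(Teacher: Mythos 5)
There is no proof to compare against in the paper itself: Theorem \ref{thm:MilmanPisier} is quoted from Milman--Pisier as exposed in \cite[Chapter 9]{Pisier-Book}, so your proposal must stand on its own --- and it does not, because its key step $(\star)$, the termwise bound $e_k(K) \leq C\, Rad_k(K)\, v_k(K)$, is false. Take $K_R = \mathrm{conv}\brac{B_2^n \cup [-Re_1,Re_1]}$ with $R$ large and $k=2$: every $2$-dimensional projection has area at most $C R$, so $v_2(K_R) \leq C\sqrt{R}$, and $Rad_2(K_R) \leq C$ since $2$-dimensional bodies have bounded Banach--Mazur distance to $B_2^2$; yet $K_R$ contains a segment of length $2R$, and four Euclidean balls of radius $t$ cover a segment of length at most $8t$, so $e_2(K_R) \geq R/4 \gg \sqrt{R}$. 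The same example kills the covering claim at the end of your Step 2: the projection $L = P_F K_R$ onto the plane containing the spike has $\volrad(L) \simeq \sqrt{R}$, but covering $L$ by balls of radius $C \sqrt{R}$ requires about $\sqrt{R}$ balls, not $e^{Ck}$. This is not a removable technicality: volume numbers at a single scale simply cannot control entropy numbers at that scale, which is exactly why Milman--Pisier is a refinement of Dudley only after summation (the paper's observation $v_k(K) \leq 2 e_k(K)$ goes in the one direction that does hold termwise). Your Step 1 regularity claim is also unfounded, since $v_k$ need not be monotone ($v_1(K_R) \simeq R$, $v_2(K_R) \simeq \sqrt{R}$).

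Two further slips in Step 2 are worth flagging. First, the dual Sudakov inequality with $M(L)$ bounds coverings of $B_2^k$ by translates of multiples of $L$ (equivalently, with $M(L)=M^*(L^\circ)$ it is $N(B_2^k, s L^\circ)$ that is controlled by $M^*(L)$); neither form gives a covering of $L$ by Euclidean balls without an entropy-duality step, which is not free. Second, the Figiel--Tomczak-Jaegermann estimate $M(L)M^*(L) \leq C\,Rad(X_L)$ holds only after replacing the Euclidean structure on $F$ by the $\ell$-position of $L$, whereas $v_k$, $M^*$ and the balls $B_2^n$ in the theorem are all tied to the fixed Euclidean structure; a covering obtained in the new structure does not transfer back. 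The actual argument in \cite[Chapter 9]{Pisier-Book} avoids termwise entropy estimates altogether: it iterates the $MM^*$-estimate together with the low-$M^*$ (Pajor--Tomczak-Jaegermann) estimate along a decreasing chain of subspaces, producing bounds on diameters of sections of the form $k^{1/2} c_k(K) \leq C\sum_{j \geq ck} j^{-1/2}Ad$ where the summand is $Rad_j(K) v_j(K)$ --- note the tail sum over $j \gtrsim k$ rather than the single scale $k$ --- and the mean-width bound (and, via Carl's theorem, entropy bounds as in the paper's Theorem \ref{thm:covering}) is then obtained by summing these. If you want to reconstruct the proof, that iteration with tail sums is the heart of it, and it cannot be shortcut by $(\star)$.
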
 

Theorem \ref{thm:MilmanPisier} was used in \cite{MilmanPisier-DudleyConjecture} to resolve in the positive a conjecture of R. M. Dudley (see \cite{Pisier-Book}). 
Indeed, let us compare the estimate (\ref{eq:MilmanPisier}) to Dudley's entropy estimate:
\begin{equation} \label{eq:Dudley}
\sqrt{n} M^*(K) \leq C \sum_{k=1}^{\infty} \frac{1}{\sqrt{k}} e_k(K) ~,
\end{equation}
where $e_k(K) := \min\set{t > 0 \;;\; N(K, t B_2^n) \leq 2^k}$ is the $k$-th entropy number. By an elementary volumetric estimate, for all $k = 1,\dots,n$:
\[
\frac{\vol(P_F K)}{e_k(K)^k \vol(P_F B_2^n)} \leq N(P_F K, e_k(K) P_F B_2^n) \leq N(K,e_k(K) B_2^n) \leq 2^k ~,~ \forall F \in G_{n,k} ~,
\]
and therefore $v_k(K) \leq 2 e_k(K)$. Consequently, up to the $Rad_k(K)$ terms, (\ref{eq:MilmanPisier}) should be seen as a (very useful) refinement of (\ref{eq:Dudley}). 

\medskip
\textbf{Acknowledgement.} I thank Apostolos Giannopoulos and Bo'az Klartag for their comments and interest.

\section{Preliminaries}

Given $F \in G_{n,k}$, we denote by $\pi_F \mu := \mu \circ P_F^{-1}$ the push-forward of a Borel measure $\mu$ on $\Real^n$ via $P_F$. A consequence of the Prekop\'a--Leindler celebrated extension of the Brunn--Minkowski inequality (e.g. \cite{GardnerSurveyInBAMS}), is that the marginal $\pi_F \mu$ of a log-concave measure $\mu$ is itself log-concave on $F$. This is particularly useful since $P_F Z_p(\mu) = Z_p(\pi_F \mu)$, as follows directly from the definitions. 

Recall that the Banach--Mazur distance between two origin--symmetric convex bodies $K,L$ in $\Real^n$ is defined as:
\[
d_{BM}(K,L) := \inf \set{ a b \; ; \; \frac{1}{b} K \subset T(L) \subset a K ~,~ T \in GL(n)} ~.
\]
By John's Theorem (e.g. \cite{GiannopoulosMilmanHandbook}), $d_{BM}(K,B_2^n) \leq \sqrt{n}$ for any origin-symmetric convex $K \subset \Real^n$. 

As for the definition of the Rademacher projection, we refer to \cite{Pisier-Book,Milman-Schechtman-Book}. We will only require the following estimate on its norm, due to Pisier (see \cite{Pisier-Book}):
\begin{equation} \label{eq:PisierRad}
Rad(X_K) \leq C \log(1+d_{BM}(K,B_2^n)) \leq C \log(1+n) ~,
\end{equation}
where the second inequality follows by John's Theorem. In addition, it is easy to show that this norm is self-dual $Rad(X_K) = Rad(X_{K^\circ})$, and since it cannot increase by passing to a subspace, the same holds by duality when passing to a quotient space: $Rad(X_{K \cap E}) , Rad(X_{P_E K}) \leq Rad(X_K)$.  

The isotropic constant of a log-concave probability measure $\mu$ on $\Real^n$ having density $f_\mu$ is defined as the following affine-invariant quantity:
\begin{equation} \label{eq:Lmu}
L_\mu := \norm{f_\mu}_{L^\infty}^\frac{1}{n} (\det \; \Cov(\mu))^{\frac{1}{2n}} ~. 
\end{equation}
Observe that $L_{\lambda_K}$ indeed coincides with $L_K$ for a convex body $K$ in $\Real^n$. 
It was shown by K. Ball \cite{Ball-PhD,Ball-kdim-sections} that given $n \geq 1$:
\[
\sup_\mu {L_\mu} \leq C \sup_{K} L_K ~,
\]
where the suprema are taken over all log-concave probability
measures $\mu$ and convex bodies $K$ in $\Real^n$, respectively (see
e.g. \cite{KlartagPerturbationsWithBoundedLK} for the non-even case).

The fundamental estimate which we employ throughout this work, and which plays an equally fundamental role in previous groundbreaking works of G. Paouris \cite{Paouris-IsotropicTail,PaourisSmallBall} and B. Klartag \cite{Klartag-Psi2} (see also Klartag--Milman \cite{KlartagEMilmanLowerBoundsOnZp}), is given by:
\begin{thm}[Paouris, Klartag] \label{thm:Zn}
Let $\mu$ denote a log-concave probability measure on $\Real^n$ with barycenter at the origin. Then:
\[
\volrad(Z_n(\mu)) \simeq \sqrt{n} \; \frac{\det \; \Cov(\mu)^{\frac{1}{2n}}}{L_\mu} \leq C \sqrt{n} \;  \det \; \Cov(\mu)^{\frac{1}{2n}} ~.
\]
\end{thm}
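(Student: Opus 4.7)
\emph{Affine reduction.} The plan is to prove the two-sided bound $\volrad(Z_n(\mu)) \simeq \sqrt{n}\det\Cov(\mu)^{1/(2n)}/L_\mu$; the weaker stated inequality then follows from the universal lower bound $L_\mu \geq L_{B_2^n} \geq c > 0$. Under $\mu \mapsto T_*\mu$ with $T \in GL(n)$, one has $Z_n(T_*\mu) = T Z_n(\mu)$ and $\Cov(T_*\mu) = T\Cov(\mu)T^*$, so both $\volrad(Z_n(\mu))$ and $\det\Cov(\mu)^{1/(2n)}$ scale by $|\det T|^{1/n}$; inspection of (\ref{eq:Lmu}) shows that $L_\mu$ is affinely invariant. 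Hence I assume $\mu$ is isotropic, $\Cov(\mu) = Id$, so $L_\mu = \|f_\mu\|_\infty^{1/n}$ and the claim reduces to $\volrad(Z_n(\mu)) \simeq \sqrt{n}/L_\mu$. Fradelizi's inequality applied to the centered log-concave density $f_\mu$ gives $f_\mu(0) \leq \|f_\mu\|_\infty \leq e^n f_\mu(0)$, hence $f_\mu(0)^{1/n} \simeq L_\mu$.

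\emph{Ball's auxiliary body.} I would introduce K. Ball's body
\[
K_n(f_\mu) := \Bigl\{x \in \Real^n : n\int_0^\infty r^{n-1} f_\mu(rx)\, dr \geq f_\mu(0)\Bigr\}.
\]
Log-concavity of $f_\mu$ and the Borell--Brunn--Minkowski principle imply (by Ball's original argument) that $K_n(f_\mu)$ is a convex body, and a polar-coordinates computation using $\int f_\mu\, dx = 1$ gives the identity $\vol(K_n(f_\mu)) \cdot f_\mu(0) = 1$; thus $\volrad(K_n(f_\mu)) \simeq \sqrt{n}/L_\mu$, and it suffices to show $Z_n(\mu) \simeq K_n(f_\mu)$ up to universal multiplicative constants. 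Directionally: for each $\theta \in S^{n-1}$ set $g_\theta(t) := \int_{\theta^\perp} f_\mu(t\theta + y)\, dy$, so that $h_{Z_n(\mu)}(\theta)^n = \int_\Real |t|^n g_\theta(t)\, dt$, while $\rho_{K_n(f_\mu)}(\theta)^n = (n/f_\mu(0))\int_0^\infty r^{n-1}f_\mu(r\theta)\, dr$. The comparison $h_{Z_n(\mu)}(\theta) \simeq \rho_{K_n(f_\mu)}(\theta)$ then proceeds in two moves: Berwald-type moment comparisons (in the spirit of (\ref{eq:Borell})) match the $n$-th moment of the log-concave marginal $g_\theta$ with its radial Ball-functional, while a sectioning inequality for log-concave densities controls the discrepancy between the radial section $f_\mu(r\theta)$ and the full marginal $g_\theta(r)$, paying only a factor comparable to $f_\mu(0) \simeq L_\mu^n$.

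\emph{Main obstacle.} The hard part is the two-sided comparison $Z_n(\mu) \simeq K_n(f_\mu)$ with dimension-free constants. The inclusion $Z_n(\mu) \supset cK_n(f_\mu)$ is the easier one (a one-dimensional $L^n$-moment estimate in each direction), whereas the reverse inclusion $Z_n(\mu) \subset CK_n(f_\mu)$ is equivalent, in the isotropic case, to the Paouris small-ball probability estimate and requires a genuinely global use of log-concavity on $\Real^n$, not merely along each line through the origin; this is the technical core carried out in \cite{Paouris-IsotropicTail, PaourisSmallBall, Klartag-Psi2}.
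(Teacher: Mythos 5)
The paper's own ``proof'' is a citation (to Paouris, Klartag, and Klartag--Perturbations); you attempt to reconstruct an actual argument, and your overall plan---reduce to the isotropic case, bring in Ball's bodies $K_p(f_\mu)$, exploit $\vol(K_n(f_\mu))\, f_\mu(0)=1$ and $f_\mu(0)^{1/n}\simeq L_\mu$---is indeed the standard route taken in those references. However, the crucial step as you state it is flawed, and your diagnosis of where the difficulty lies is off.

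The problem is your proposed pointwise comparison $h_{Z_n(\mu)}(\theta)\simeq\rho_{K_n(f_\mu)}(\theta)$. This cannot hold: $Z_n(\mu)$ is origin-symmetric while $K_n(f_\mu)$ generally is not, and in any case you are equating a \emph{support} function with a \emph{radial} function, which only agree (up to constants) for bodies that are uniformly close to Euclidean. No amount of Berwald-type moment comparison or ``sectioning inequality'' will force $h_{Z_n}(\theta)\simeq\rho_{K_n}(\theta)$ direction-by-direction. The correct mechanism is the exact polar-coordinates identity
\[
h_{Z_p(\mu)}(\theta)^p \;=\; f_\mu(0)\int_{K_{n+p}(f_\mu)}\abs{\scalar{x,\theta}}^p\,dx \;=\; f_\mu(0)\,\vol\bigl(K_{n+p}(f_\mu)\bigr)\, h_{Z_p(\lambda_{K_{n+p}(f_\mu)})}(\theta)^p,
\]
valid for all $p>0$, which one obtains by writing both sides in polar coordinates and using the definition of $\rho_{K_{n+p}(f_\mu)}$. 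With $p=n$, the one-dimensional comparability of Ball's bodies ($K_n(f_\mu)\simeq K_{2n}(f_\mu)$) gives $(f_\mu(0)\,\vol(K_{2n}(f_\mu)))^{1/n}\simeq 1$, and the elementary fact $Z_n(\lambda_L)\simeq \mathrm{conv}(L\cup -L)$ (stated in the paper as an easy Brunn--Minkowski corollary) yields $\volrad(Z_n(\lambda_{K_{2n}(f_\mu)}))\simeq\volrad(K_{2n}(f_\mu))\simeq\volrad(K_n(f_\mu))=\bigl(\vol(B_2^n)\,f_\mu(0)\bigr)^{-1/n}\simeq\sqrt{n}/L_\mu$. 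Chaining these gives the two-sided estimate.

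Your remark that the upper inclusion ``is equivalent to the Paouris small-ball estimate and requires a genuinely global use of log-concavity'' is also incorrect. Both directions of the volume estimate in Theorem \ref{thm:Zn} follow from the exact identity above plus one-dimensional log-concavity arguments (comparability of $K_p(f)$ across $p$, and $Z_n(\lambda_L)\simeq\mathrm{conv}(L,-L)$); Theorem \ref{thm:Zn} is in fact one of the \emph{ingredients} Paouris uses to prove the small-ball estimate, not a consequence of it. So while your reduction and the use of Ball's bodies are on the right track, the directional comparison you propose does not hold as stated and should be replaced by the polar-coordinates identity.
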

\begin{proof}
For the first equivalence, see  \cite[Proposition 3.7]{PaourisSmallBall} or \cite[Lemma 2.8]{Klartag-Psi2} in the case that $\mu$ is even; in the general case, see \cite[Lemma 2.2]{KlartagPerturbationsWithBoundedLK} and the subsequent computation. The second inequality follows since $L_{\mu} \geq c > 0$ for any probability measure $\mu$, see e.g. \cite{Klartag-Psi2}. 
\end{proof}

The following corollary is due to Paouris:
\begin{cor}[Paouris] \label{cor:Zp}
With the same conditions as above, for any $p \in [1,n]$:
\[
\volrad(Z_p(\mu)) \leq C \sqrt{p} \; \det \; \Cov(\mu)^{\frac{1}{2n}} ~.
\]
\end{cor}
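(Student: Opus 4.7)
The plan is to reduce to the isotropic case by affine invariance, then to integer $p$ via (\ref{eq:Borell}), and finally to apply Theorem \ref{thm:Zn} not to $\mu$ itself but to its marginal on a $p$-dimensional subspace. I would first observe that both sides of the asserted inequality are homogeneous of the same degree under the $GL(n)$-action $\mu\mapsto T_\ast\mu$: since $Z_p(T_\ast\mu)=T\,Z_p(\mu)$ by change of variable in the support-function integral, and $\Cov(T_\ast\mu)=T\Cov(\mu)T^t$, both $\volrad(Z_p(\mu))$ and $\det\Cov(\mu)^{1/(2n)}$ scale by $|\det T|^{1/n}$. This reduces the problem to proving $\volrad(Z_p(\mu))\le C\sqrt{p}$ for isotropic $\mu$ and every $p\in[1,n]$. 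Setting $k=\lceil p\rceil\le 2p$, the Berwald/Borell inclusion (\ref{eq:Borell}) gives $Z_p(\mu)\subseteq Z_k(\mu)\subseteq C(k/p)Z_p(\mu)\subseteq 2C\,Z_p(\mu)$, so $\volrad(Z_p(\mu))\simeq\volrad(Z_k(\mu))$ and it suffices to treat $p=k$ a positive integer; the case $k=n$ is exactly Theorem~\ref{thm:Zn}.

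For $k<n$, the crucial step is to apply Theorem~\ref{thm:Zn} \emph{in dimension $k$}. For any $F\in G_{n,k}$ the marginal $\pi_F\mu$ is a log-concave probability measure on $F\simeq\Real^k$ by Prekop\'a--Leindler, and since $\mu$ is isotropic, $\Cov(\pi_F\mu)=P_F\Cov(\mu)P_F^t=\mathrm{Id}_F$, so $\pi_F\mu$ is itself isotropic on $F$. Combining the identity $P_FZ_p(\nu)=Z_p(\pi_F\nu)$ noted in Section~2 with Theorem~\ref{thm:Zn} applied in dimension $k$ then yields
\[
\volrad_k\bigl(P_FZ_k(\mu)\bigr)=\volrad_k\bigl(Z_k(\pi_F\mu)\bigr)\le C\sqrt{k},
\]
i.e. a uniform bound $v_k(Z_k(\mu))\le C\sqrt{k}$ on every $k$-dimensional orthogonal projection of $Z_k(\mu)$.

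The hard part will be upgrading this $k$-dimensional projection estimate into a bound on $\volrad_n(Z_k(\mu))$ itself, because for a generic convex body the passage from $v_k$ to $\volrad_n$ is simply false, and one must exploit the specific structure of the centroid body. I would follow Paouris's original strategy (cf.~\cite[Proposition~3.7]{PaourisSmallBall} and \cite[Lemma~2.8]{Klartag-Psi2}): combine the projection estimate above with Markov-type tail bounds for the linear functionals $x\mapsto\scalar{x,\theta}$ under $\mu$---themselves one-dimensional consequences of Prekop\'a--Leindler and Borell's lemma---and iterate Theorem~\ref{thm:Zn} on conditional marginals of $\mu$, exploiting the fact that the covariance determinants of orthogonal marginal/conditional decompositions multiply, so that no dimensional factor accumulates when closing the volumetric bound. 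Thus the entire proof hinges on Theorem~\ref{thm:Zn}, the Prekop\'a--Leindler-derived behaviour of marginals, and the Borell-type comparison (\ref{eq:Borell}); the main technical delicacy is precisely the last step, where the right orchestration of these ingredients, rather than any additional deep input, is required.
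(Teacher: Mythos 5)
Your reduction to the isotropic case, the reduction to integer exponent via~(\ref{eq:Borell}), and the application of Theorem~\ref{thm:Zn} to the isotropic marginal $\pi_F\mu$ to get $\volrad_k(P_F Z_k(\mu))\le C\sqrt{k}$ for every $F\in G_{n,k}$ all match the paper. But you then go wrong at precisely the step you flag as ``the hard part.'' You assert that ``for a generic convex body the passage from $v_k$ to $\volrad_n$ is simply false'' and that one must therefore re-enter Paouris's machinery (tail bounds, conditional marginals, etc.). This is not so: the passage in the direction you actually need is a completely general, classical inequality. The Alexandrov inequalities between quermassintegrals combined with Kubota's integral formula give, for \emph{every} convex body $K\subset\Real^n$ and every $k\in\{1,\dots,n\}$,
\[
\volrad_n(K)\;\le\;\Bigl(\int_{G_{n,k}}\volrad_k(P_F K)^{\,k}\,d\lambda_{G_{n,k}}(F)\Bigr)^{1/k}\;\le\;\sup_{F\in G_{n,k}}\volrad_k(P_F K)\;=\;v_k(K)\,.
\]
(The false direction is the reverse one: a small $n$-volume does not control $k$-dimensional projections. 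But you need the forward direction, which holds.) The paper invokes exactly this Alexandrov/Kubota bound and then substitutes the uniform projection estimate $\volrad_k(P_F Z_k(\mu))=\volrad_k(Z_k(\pi_F\mu))\le C\sqrt{k}$, finishing in one line.

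So the concrete gap is a missing idea: you lack the Alexandrov/Kubota inequality that converts uniform $k$-dimensional projection volume-radius bounds into an $n$-dimensional volume-radius bound, and instead you propose to ``follow Paouris's original strategy'' with tail bounds and conditional marginals. That sketch is too vague to constitute a proof, and it is also unnecessary — once you know the Alexandrov/Kubota step, the argument closes immediately and no further structural input about $Z_p$ is required beyond what you already extracted from Theorem~\ref{thm:Zn} and the marginal identity $P_F Z_p(\mu)=Z_p(\pi_F\mu)$.
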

\begin{proof}
As both sides are invariant under volume-preserving linear transformations of $\Real^n$ and scale linearly under dilation, we may assume that $\mu$ is isotropic. The claim is then the content of \cite[Theorem 6.2]{Paouris-IsotropicTail}. Indeed, we may assume by (\ref{eq:Borell}) that $p$ is an integer, and so by Alexandrov's inequality between quermassintegrals and Kubota's formula (e.g. \cite{GiannopoulosMilmanHandbook}), we have:
\[
\volrad(Z_p(\mu)) \leq \brac{\int_{G_{n,p}} \volrad(P_F Z_p(\mu))^p d\lambda_{G_{n,p}}(F)}^{1/p} ~,
\]
where $\lambda_{G_{n,p}}$ denotes the Haar probability measure on $G_{n,p}$. Employing Theorem \ref{thm:Zn} for the isotropic log-concave measure $\pi_F \mu$ on $F \in G_{n,p}$, we see that $\volrad(P_F Z_p(\mu)) = \volrad(Z_p(\pi_F \mu)) \leq C \sqrt{p}$, and so the conclusion follows. 
\end{proof}

\section{Proofs} \label{sec:proof}

Our computations are based on the following immediate corollary of Theorem \ref{thm:Zn} and Corollary \ref{cor:Zp}:
\begin{prop} \label{prop:main}
Let $\mu$ denote a log-concave probability measure on $\Real^n$ with barycenter at the origin. Let $p \geq 1$ and $k = 1,\ldots,n$. Then:
\[
v_k(Z_p(\mu))  \leq C \sqrt{\frac{p}{k}} \max(\sqrt{p},\sqrt{k}) \max_{F \in G_{n,k}} \det \; \Cov(\pi_F \mu)^{\frac{1}{2k}} ~.
\]
\end{prop}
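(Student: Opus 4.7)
The plan is to unpack the definition of $v_k$, reduce everything to the marginal measure $\pi_F \mu$ on a $k$-dimensional subspace, and then apply either Corollary \ref{cor:Zp} or Theorem \ref{thm:Zn} depending on whether $p \leq k$ or $p \geq k$.

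First I would use the identity $P_F Z_p(\mu) = Z_p(\pi_F \mu)$ noted in the Preliminaries section, together with the fact that $\pi_F \mu$ is log-concave on $F \cong \Real^k$ with barycenter at the origin. Thus computing $v_k(Z_p(\mu))$ reduces to bounding $\volrad(Z_p(\pi_F\mu))$ uniformly in $F \in G_{n,k}$, where $\pi_F \mu$ plays the role of a log-concave probability measure in dimension $k$.

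Next I would split into two regimes. When $1 \leq p \leq k$, Corollary \ref{cor:Zp} applied in dimension $k$ to $\pi_F \mu$ yields
\[
\volrad(Z_p(\pi_F \mu)) \leq C \sqrt{p}\; \det\;\Cov(\pi_F \mu)^{\frac{1}{2k}},
\]
and since in this regime $\max(\sqrt{p},\sqrt{k}) = \sqrt{k}$, one has $\sqrt{p} \simeq \sqrt{p/k}\cdot \max(\sqrt{p},\sqrt{k})$, matching the claimed bound. When $p \geq k$, I would first invoke the Borell-type inclusion (\ref{eq:Borell}) in dimension $k$ to obtain $Z_p(\pi_F \mu) \subset C(p/k) Z_k(\pi_F \mu)$, and then apply Theorem \ref{thm:Zn} (in dimension $k$) to the log-concave measure $\pi_F \mu$:
\[
\volrad(Z_p(\pi_F \mu)) \leq C \frac{p}{k} \volrad(Z_k(\pi_F \mu)) \leq C' \frac{p}{\sqrt{k}} \det\;\Cov(\pi_F \mu)^{\frac{1}{2k}}.
\]
In this regime $\max(\sqrt{p},\sqrt{k})=\sqrt{p}$, and $p/\sqrt{k} = \sqrt{p/k}\cdot \sqrt{p}$, which again matches the claimed form.

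Finally I would take the supremum over $F \in G_{n,k}$ of both sides to conclude. There is essentially no obstacle here: the statement is engineered so that the two natural tools, Corollary \ref{cor:Zp} and Theorem \ref{thm:Zn}, each handle one side of the threshold $p = k$, and the max expression $\max(\sqrt{p},\sqrt{k})$ is precisely the envelope of the two resulting bounds. The only minor care needed is to make sure that the determinant factor appears with exponent $1/(2k)$ (the correct dimension of $F$), which is automatic once one remembers to apply the cited results in dimension $k$ rather than $n$.
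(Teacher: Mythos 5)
Your proof is correct and matches the paper's own argument essentially line for line: the same reduction via $P_F Z_p(\mu) = Z_p(\pi_F\mu)$, the same case split at $p=k$ using Corollary \ref{cor:Zp} for $p\le k$ and the Borell inclusion (\ref{eq:Borell}) plus Theorem \ref{thm:Zn} for $p\ge k$, and the same final maximization over $F\in G_{n,k}$.
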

\begin{proof}
Let $F \in G_{n,k}$. When $k \leq p$ we use (\ref{eq:Borell}) and Theorem \ref{thm:Zn}:
\[
\volrad(P_F(Z_p(\mu)))\leq C \frac{p}{k} \volrad(P_F(Z_k(\mu))) = C \frac{p}{k} \volrad(Z_k(\pi_F \mu)) \leq C' \frac{p}{k} \sqrt{k} \; \det \; \Cov(\pi_F \mu)^{\frac{1}{2k}} ~.
\]
When $k \geq p$ we use Corollary \ref{cor:Zp}:
\[
\volrad(P_F(Z_p(\mu))) = \volrad(Z_p(\pi_F \mu)) \leq C \sqrt{p} \; \det \; \Cov(\pi_F \mu)^{\frac{1}{2k}} ~.
\]
Combining the two cases and maximizing over $F \in G_{n,k}$, the assertion immediately follows. 
\end{proof}
\begin{rem}
When $\mu$ is the uniform measure on an isotropic convex body, this estimate was already deduced in \cite[Theorem 2.4]{GiannopoulosPajorPaourisPsi2} using a slightly different argument.
\end{rem}

\subsection{Proof of Theorem \ref{thm:main}}

By the Milman--Pisier Theorem \ref{thm:MilmanPisier}:
\begin{equation} \label{eq:MP}
\sqrt{n} M^*(Z_p(\mu)) \leq C \sum_{k=1}^n \frac{1}{\sqrt{k}} Rad_k(Z_p(\mu)) v_k(Z_p(\mu)) ~.
\end{equation}
Obviously $Rad_k(Z_p(\mu)) \leq Rad(X_{Z_p(\mu)})$ by passing to a quotient space. 
Note that by (\ref{eq:Borell}) we know that:
\[
\frac{1}{C} Z_2(\mu) \subset Z_p(\mu) \subset C p Z_2(\mu) ~,
\]
for $p \geq 1$, and since $Z_2(\mu)$ is an ellipsoid (the Legendre ellipsoid of inertia), it follows by Pisier's estimate (\ref{eq:PisierRad}) that:
\[
 Rad(X_{Z_p(\mu)}) \leq C' \log(1+d_{BM}(Z_p(\mu) ,Z_2(\mu))) \leq C' \log(1+ p) ~.
\]
On the other hand, $Rad_k(L) \leq C \log(1+k)$ for any origin-symmetric convex $L$ by applying Pisier's estimate coupled with John's Theorem (\ref{eq:PisierRad}) in dimension $k$. 
Consequently:
\[
Rad_k(Z_p(\mu)) \leq \min(Rad(X_{Z_p(\mu)}), C \log(1+k)) \leq C' \log(1 + \min(k,p)) ~. 
\]
However, as one may check, there will be no loss in the final estimate in using the trivial $Rad_k(Z_p(\mu)) \leq Rad(X_{Z_p(\mu)})$. 

\medskip

Now, recalling that $\lambda_1^2 \geq \lambda_2^2 \geq \ldots \geq \lambda_n^2 > 0$ denote the eigenvalues of $\Cov(\mu)$, we have by Proposition \ref{prop:main}:
\[
v_k(Z_p(\mu))  \leq C \sqrt{\frac{p}{k}} \max(\sqrt{p},\sqrt{k}) \max_{F \in G_{n,k}} \det \; \Cov(\pi_F \mu)^{\frac{1}{2k}}  \leq C \sqrt{\frac{p}{k}} \max(\sqrt{p},\sqrt{k})(\Pi_{i=1}^{k} \lambda_i)^{1/k} ~;
\]
(the last inequality is an elementary exercise in linear algebra, for which it may be useful to recall the Cauchy--Binet formula). 
The first inequality asserted in Theorem \ref{thm:main} then immediately follows from (\ref{eq:MP}):
\begin{eqnarray*}
\sqrt{n} M^*(Z_p(\mu)) &\leq&  C Rad(Z_p(\mu)) \sum_{k=1}^n \max\brac{\sqrt{\frac{p}{k}},\frac{p}{k}}(\Pi_{i=1}^{k} \lambda_i)^{1/k} ~;
\end{eqnarray*}
to get a slightly more aesthetically pleasing bound, we may apply the Arithmetic-Geometric Means Inequality and proceed to estimate:
\begin{eqnarray*}
&\leq& C Rad(Z_p(\mu)) \sum_{k=1}^n \max\brac{\sqrt{\frac{p}{k}},\frac{p}{k}} \frac{1}{k} \sum_{i=1}^k \lambda_i \\
& = & C Rad(Z_p(\mu)) \sum_{i=1}^n \lambda_i \sum_{k=i}^n \max\brac{\sqrt{\frac{p}{k}},\frac{p}{k}} \frac{1}{k} \\
& \leq & C' Rad(Z_p(\mu))  \sum_{i=1}^n \max\brac{\sqrt{\frac{p}{i}},\frac{p}{i}} \lambda_i \\
& \leq & C' Rad(Z_p(\mu))  \sum_{i=1}^n \max\brac{\sqrt{\frac{p}{i}},\frac{p}{i}} (\Pi_{j=1}^{i} \lambda_j)^{1/i} ~, 
\end{eqnarray*}
and so the equivalent bound using the arithmetic average follows. The proof of Theorem \ref{thm:main} is complete.

\subsection{Proof of Theorem \ref{thm:covering}}

To show the statement about regularity, we use the following corollary of a slightly more general version of the Milman--Pisier Theorem (see Section \ref{sec:conclude}) coupled with the Pajor--Tomczak-Jaegermann refinement of V. Milman's low-$M^*$-estimate, which reads as follows (see the proof of \cite[Corollary 9.7]{Pisier-Book}):
\[
k^{1/2} c_{k}(K) \leq  C \sum_{j=\lfloor c k \rfloor}^n \frac{1}{\sqrt{j}} Rad_j(K) v_j(K) \;\;\; \forall k = 1,\ldots,n-1 ~,
\]
where:
\[
c_k(K) := \inf \set{ \diam(K \cap F) \; ; \; F \in G_{n,n-k}} ~.
\]
Consequently, by Carl's Theorem \cite[Theorem 5.2]{Pisier-Book} and Proposition \ref{prop:main} applied to $\mu = \lambda_{K}$ and $p=n$, we obtain:
\begin{eqnarray*}
& & \sup_{k = 1,\ldots,n} \frac{k^{1/2}}{\log(1+n/k)} e_k(K) \leq  C' \sup_{k = 1,\ldots,n-1} \frac{k^{1/2}}{\log(1+n/k)} c_k(K) \\
&\leq &  C'' \sup_{k = 1,\ldots,n-1} \frac{1}{\log(1+n/k)} \sum_{j=\lfloor c k \rfloor}^n \frac{1}{\sqrt{j}} Rad_j(K) v_j(K) \\
& \leq & C''' \sup_{k = 1,\ldots,n-1} \frac{n Rad(X_K) L_K}{ \log(1+n/k)} \sum_{j=\lfloor c k \rfloor}^n \frac{1}{j} \leq C'''' n Rad(X_K) L_K ~.
\end{eqnarray*}
In other words:
\[
N(K , C \sqrt{n} Rad(X_K) L_K  \sqrt{n/k} \log(1+n/k) B_2^n) \leq 2^k \;\;\; \forall k = 1,\ldots,n ~.
\]
Setting $t = Rad(X_K) L_K \sqrt{n/k} \log(1+n/k)$, Theorem \ref{thm:covering} immediately follows.
Note that in isotropic position $K \subset C n L_K B_2^n$ \cite{Milman-Pajor-LK,KLS}, and so $N(K, t \sqrt{n} B_2^n) = 1$ for $t \geq C \sqrt{n} L_K$.

\subsection{Covering $L_p$-centroid bodies}

Similar covering estimates may be deduced for $Z_p(\mu)$. The previous best-known estimate for these covering estimates is due to Giannopoulos--Paouris--Valettas \cite{GPV-ImprovedPsi2}, who showed that for any isotropic log-concave measure $\mu$ on $\Real^n$ and $p \in [1,n]$:
\begin{equation} \label{eq:GPV}
N(Z_p(\mu),C_1 t \sqrt{p} B_2^n) \leq \exp\brac{C_2 \frac{n}{t^2} + C_3 \frac{\sqrt{n} \sqrt{p}}{t}}  \;\;\; \forall t \geq 1 ~.
\end{equation}
Note that since $Z_p(\mu) \subset C p Z_2(\mu) = C p B_2^n$ by (\ref{eq:Borell}), it is enough to only test $t \in [1,\sqrt{p}]$. 
Also note that setting $\mu = \lambda_K$ and $p=n$, this recovers Hartzoulaki's estimate (\ref{eq:Hartzoulaki}). 

Invoking Sudakov's inequality (\ref{eq:Sudakov}) and using the estimate (\ref{eq:M*-summary}) on $M^*(Z_p(\mu))$, an improved covering estimate immediately follows when $t \geq \sqrt{n/p} \log^2(1+n)$. Summarizing the resulting presently best-known estimates when $p \in [1, n /\log^2(1+n)]$ and $t \in [1,\sqrt{p}]$, we have:
\[
\log N(Z_p(\mu),C_1 t \sqrt{p} B_2^n) \leq 
\begin{cases} 
C_2 \frac{n}{t^2} &  1 \leq t \leq \sqrt{n/p} \\    
C_3 \frac{\sqrt{n} \sqrt{p}}{t} &  \sqrt{n/p}  \leq t \leq \sqrt{n/p} \log^2 (1+n) \\
C_4 \frac{n \log^2(1+n)}{t^2} &  \sqrt{n/p} \log^2(1+n) \leq t \leq \sqrt{p} 
\end{cases} ~.
\]
When $n / \log^2(1+n) \leq p \leq n$ one may obtain a slight further improvement beyond Sudakov's inequality, by invoking an argument similar to the one used in the proof of Theorem \ref{thm:covering}; we leave this to the interested reader.

\section{Concluding Remarks} \label{sec:conclude}

\subsection{Extended Milman--Pisier Theorem}
For completeness, we mention that the Milman--Pisier Theorem \ref{thm:MilmanPisier} is in fact a bit more general (see \cite[Theorem 9.1]{Pisier-Book}):
\begin{thm}[Milman--Pisier]
For any origin-symmetric convex body in $\Real^n$ and $j = 0,\ldots,n-1$:
\[
\sqrt{n-j} M_{n-j}^*(K) \leq C \sum_{k= \lfloor c j + 1 \rfloor }^n \frac{1}{\sqrt{k}} Rad_k(K) v_k(K) ~,
\]
where:
\[
M_{m}^*(K) := \inf \set{ M^*(P_F K) \; ; \; F \in G_{n,m}} ~.
\]
\end{thm}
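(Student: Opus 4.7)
The plan is to adapt the proof of the original Milman--Pisier theorem (Theorem 1.6 above, corresponding to $j=0$), modifying the underlying iterative construction so that it yields an upper bound on $M^*_{n-j}(K)$ rather than on $M^*(K)$, and so that only contributions at ``large'' scales $k \geq \lfloor cj+1\rfloor$ appear in the resulting sum.

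The central technical tool would be a ``one-step dimension-descent'' lemma: for any origin-symmetric convex body $L$ in an $m$-dimensional subspace, a suitable random quotient $P_F L$ of dimension $m' \in [m/2, m-1]$ satisfies a telescoping estimate relating $M^*(L)$ to $M^*(P_F L)$ up to a single-scale contribution of the order $\frac{1}{\sqrt{m}} Rad_m(L) v_m(L)$. Such a lemma is established probabilistically on the Grassmannian $G_{m,m'}$, using a dual low-$M^*$-type estimate to control $M^*(P_F L)$ and invoking Pisier's K-convexity theorem to convert the volumetric bound $\volrad(P_F L) \leq v_m(L)$ into an $M^*$-bound at the cost of the Rademacher factor $Rad_m(L)$. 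This lemma is essentially the heart of the original Milman--Pisier argument; the self-duality and subspace-monotonicity of $Rad$ noted in the Preliminaries ensure that all $Rad$-parameters of subsequent quotients remain controlled by $Rad_m(K)$.

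Given this lemma, one iterates it starting from $K$ at dimension $n$, running through a dyadic schedule of dimensions $n = m_0 > m_1 > \cdots > m_T$. For the original theorem ($j = 0$) the iteration is run to completion, picking up contributions at \emph{all} dyadic scales. For the extended theorem, the iteration is truncated: one terminates the descent at the first stage $T$ where further contributions would fall at scales below $cj$, i.e., once the accumulated codimension of the quotient becomes $n-j$. Telescoping produces a bound of the form $\sqrt{n-j}\,M^*_{n-j}(K) \leq C\sum_{i} Rad_{m_i}(K)\,v_{m_i}(K)$, where the dyadic sum ranges only over scales $m_i \geq cj$. Passing from this dyadic sum to the full sum $\sum_{k=\lfloor cj+1\rfloor}^n \frac{1}{\sqrt{k}} Rad_k(K) v_k(K)$ is a routine comparison exploiting the slow (polylogarithmic) variation of the summand across dyadic ranges.

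The principal obstacle is the one-step descent lemma itself, whose proof requires simultaneous control of three quantities on the random quotient: its $M^*$ (via dual low-$M^*$), its Rademacher-projection norm (via Pisier's K-convexity), and its volumetric behaviour (via the inverse Blaschke--Santal\'o inequality). Tying these together in a single probabilistic selection on $G_{m,m'}$ is the key step of the original Milman--Pisier proof. A secondary, but genuine, technical difficulty in the present extended version lies in the careful scale bookkeeping that ensures the lower cutoff in the summation is exactly $\lfloor cj+1 \rfloor$ and not some weaker substitute; this requires choosing the dyadic schedule of dimensions $m_i$ with the correct ratio so that the codimensional budget $n - m_T = j$ is exhausted precisely at scales $\simeq cj$.
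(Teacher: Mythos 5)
The paper does not prove this statement at all: it is quoted verbatim from Pisier's book (Theorem~9.1 there) ``for completeness,'' so there is no internal proof to compare against. Your proposal is therefore an independent attempt, and it is evaluated as such.

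At the level of architecture, your sketch points in the right direction: the Milman--Pisier argument is indeed an iterated quotient construction in which, at each descent step, a random quotient is selected so that its mean width, its Rademacher-projection norm, and its volume radius are simultaneously controlled --- via the low-$M^*$ estimate, Pisier's $K$-convexity theorem, and an inverse Santal\'o bound, respectively --- and the contributions telescope into the dyadic sum. You also correctly identify that the lower cutoff $\lfloor cj+1\rfloor$ should arise from truncating the iteration. So far, this is consistent with the actual source.

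However, there are two genuine problems. First, you explicitly defer the entire substance of the argument to a ``one-step dimension-descent lemma'' that you describe but do not prove; you yourself call it ``the principal obstacle.'' Since that lemma \emph{is} the Milman--Pisier theorem in essence, the proposal is a roadmap rather than a proof. Second, your dimensional bookkeeping is inverted: to bound $M^*_{n-j}(K)$, the iteration should terminate when the \emph{codimension} of the quotient reaches $j$ (so the quotient has dimension $n-j$), not ``once the accumulated codimension of the quotient becomes $n-j$'' as you write. Moreover, even with the corrected bookkeeping, it is not self-evident why stopping at codimension $j$ should eliminate all contributions at scales \emph{below} $cj$ rather than scales below $n-j$; the relation between the codimensional budget $j$ and the scale cutoff $cj$ (with $c<1$) is a non-trivial feature of the Pajor--Tomczak refinement of the low-$M^*$ estimate used in the iteration, and your sketch does not address it. As written, the proposal would not yield the stated cutoff.
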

Here $M^*(L)$ denotes the (half) mean-width of $L$ in its linear hull $F \in G_{n,m}$, namely:
\[
M^*(L) = \int_{S^{n-1} \cap F} h_L(\theta) d\sigma_{S^{n-1} \cap F}(\theta) ~,
\]
where $\sigma_{S^{n-1} \cap F}$ denotes the corresponding Haar probability measure. Plugging in the estimates of the previous section, one immediately obtains upper bounds on $M_{n-j}^*(Z_p(\mu))$; we leave this again to the interested reader. 

\subsection{Removing non-optimal terms}

Most probably the $\log(1+n)$ term which appears in our estimates is non-optimal. This is in contrast with the norm of the Rademacher projection term, which should play a role in the estimates (although perhaps with a different power), as in the best-known estimate for the minimal mean-width. To remove the $\log(1+n)$ term, perhaps a majoring-measures type approach in the spirit of Talagrand (see \cite{Talagrand-Book}) would be successful. However, this seems difficult at this point. 

As for the $L_K$ term, whether it is possible to remove it from our estimate on the mean-width is intimately connected to the Slicing Problem. We refer the reader to the PhD Thesis of K. Ball \cite{Ball-PhD}, who showed that when the isotropic constant is bounded then the isotropic position is an $M$-position, and to the work of Bourgain, Klartag and Milman \cite{BKM-symmetrizations}, who conversely showed that if the isotropic position is always an $M$-position, then the isotropic constant is universally bounded. For completeness, we recall the corresponding arguments: 

\begin{prop} \label{prop:Lk}
Denote:
\begin{eqnarray*}
e_{m}^\wedge & := & \sup \set{ e_m(K) / \sqrt{m} \; ; \; K \text{ is an isotropic convex body in $\Real^m$} } ~, \\
L_n & := & \sup \set{ L_K \; ; \; K \text{ is an (isotropic) convex body in $\Real^n$}} ~.
\end{eqnarray*}
Then:
\[
L_n \leq \inf_{\lambda \in (0,1]} C^{1/\lambda} (e_{\lfloor (1+\lambda) n \rfloor}^\wedge)^{1+\lambda} ~.
\]
Conversely, for any isotropic convex body $K$ of volume one in $\Real^n$:
\[
e_n(K) \leq C L_K \sqrt{n} ~,
\]
and hence:
\[
e_n^\wedge \leq C L_n ~.
\]
\end{prop}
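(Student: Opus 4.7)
The plan is to prove the two inequalities separately.

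For the converse direction, given an isotropic convex body $K$ of unit volume in $\Real^n$, I would invoke Hartzoulaki's estimate (\ref{eq:Hartzoulaki}): $N(K, t\sqrt n B_2^n) \leq \exp(CnL_K/t)$ for all $t > 0$. Choosing $t = CL_K/\log 2$ forces the right-hand side to equal $2^n$, so $e_n(K) \leq C L_K\sqrt n$. Taking the supremum over isotropic $K$ then yields $e_n^\wedge \leq C L_n$.

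For the main direction, the plan is to follow the Bourgain--Klartag--Milman product construction. Given an isotropic $K \subset \Real^n$ of unit volume (so $\Cov(\lambda_K) = L_K^2 Id$) and $\lambda \in (0,1]$, set $m := \lfloor (1+\lambda) n\rfloor$ and $k := m-n$, and form
\[
\tilde K := K \times \sqrt 3 L_K [-1,1]^k \subset \Real^m.
\]
Since the uniform measure on $[-\sqrt 3 L_K,\sqrt 3 L_K]$ has variance $L_K^2$, one has $\Cov(\lambda_{\tilde K}) = L_K^2 Id$. After normalizing to unit volume, $\tilde K' := (2\sqrt 3 L_K)^{-k/m}\tilde K$ is an isotropic convex body in $\Real^m$, and using $k/m = \lambda/(1+\lambda)$ a short computation yields the identity
\[
L_K = (2\sqrt 3)^\lambda L_{\tilde K'}^{1+\lambda}.
\]
Apply the hypothesis to $\tilde K'$: $N(\tilde K', e_m^\wedge \sqrt m B_2^m) \leq 2^m$.

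The crucial step is to project $\tilde K'$ onto its last $k$ coordinates, i.e., onto its cube factor. Since covering numbers do not increase under projection and $B_2^m$ projects onto $B_2^k$, the hypothesis yields
\[
N\!\left([-\sqrt 3 L_{\tilde K'},\sqrt 3 L_{\tilde K'}]^k,\, e_m^\wedge \sqrt m B_2^k\right) \leq 2^m.
\]
The elementary volumetric lower bound $N(L_1,L_2) \geq \vol(L_1)/\vol(L_2)$, combined with $\vol(B_2^k)^{1/k} \simeq 1/\sqrt k$, converts this into an inequality between $L_{\tilde K'}$ and $e_m^\wedge$. Extracting $k$-th roots and using $m/k = (1+\lambda)/\lambda$ simplifies to $L_{\tilde K'} \leq C^{1/\lambda} e_m^\wedge$, after which the above identity gives
\[
L_K \leq (2\sqrt 3)^\lambda (C^{1/\lambda} e_m^\wedge)^{1+\lambda} \leq (C')^{1/\lambda}(e_m^\wedge)^{1+\lambda},
\]
as claimed.

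The main obstacle is identifying the correct construction together with the correct projection. The naive alternative --- partitioning $\tilde K'$ into pieces of diameter $O(e_m^\wedge\sqrt m)$ and directly computing the second moment --- is vacuous, since the cross terms exactly cancel by the law of total variance. The insight is that the cube factor of the product body $\tilde K'$ admits a sharp, explicit volumetric covering lower bound in $\Real^k$, perfectly positioned to be matched against the covering upper bound supplied by the hypothesis; the $C^{1/\lambda}$ factor in the final estimate then emerges cleanly from the ratio $m/k = (1+\lambda)/\lambda$.
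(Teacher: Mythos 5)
Your proof is correct and follows essentially the same route as the paper's: both directions are handled the same way, the key construction being a product body in $\Real^m$ that is isotropic of unit volume, with the covering hypothesis applied to it and then restricted to the added factor to extract a volumetric lower bound. The only difference is cosmetic: you use a cube $\sqrt{3}L_K[-1,1]^k$ (variance $L_K^2$ per coordinate, which lets you write the scaling explicitly) whereas the paper uses the unit-volume Euclidean ball $D_{m-n}$ with appropriate powers of $L_{D_{m-n}}/L_K$ on each factor, and you phrase the restriction as a projection onto $E$ while the paper intersects with $E$ — for a centered product body these coincide, so the two versions are interchangeable.
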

\begin{proof}
The second assertion follows from the work of Ball \cite{Ball-PhD}, who showed that when $L_K$ is bounded, the isotropic position is an $M$-position. Further refinements pertaining to regularity were obtained by Hartzoulaki  (\ref{eq:Hartzoulaki}), Giannopoulos--Paouris--Pajor \cite{GiannopoulosPajorPaourisPsi2} and Giannopoulos--Paouris--Valettas (\ref{eq:GPV}). Any of these results implies in particular that $e_n(K) \leq C \sqrt{n} L_K$. 

To show the first assertion, we use a small variation on the argument from \cite{BKM-symmetrizations}. Let $K$ denote an isotropic convex body in $\Real^n$. Given $m \geq n$, denote by $Q_m$ the following convex body in $\Real^m$:
\[
Q_{m} := \brac{\frac{L_{D_{m-n}}}{L_K}}^{\frac{m-n}{m}} K \times \brac{\frac{L_K}{L_{D_{m-n}}}}^{\frac{n}{m}} D_{m-n} ~,
\]
where $D_{m-n}$ is the homothetic copy of $B_2^{m-n}$ having volume one. It is immediate to verify that $Q$ is isotropic, and consequently $e_m(Q_m) \leq e_m^\wedge \sqrt{m}$.  Denoting by $E$ the subspace spanned by the last $m-n$ coordinates and by $B_E$ its unit Euclidean ball, it is straightforward to verify:
\[
N(Q_m \cap E, e_m(Q_m) B_E) \leq N(Q_m,e_m(Q_m) B_2^m) \leq 2^m ~.
\]
On the other hand, a trivial volumetric estimate yields:
\[
N(Q_m \cap E, e_m(Q_m) B_E) ^{\frac{1}{m-n}} \geq \frac{\volrad \brac{\brac{\frac{L_K}{L_{D_{m-n}}}}^{\frac{n}{m}} D_{m-n} }}{\volrad(e_m(Q_m) B_2^{m-n})} \geq \frac{\brac{\frac{L_K}{L_{D_{m-n}}}}^{\frac{n}{m}}  c \sqrt{m-n}}{\sqrt{m} e_m^\wedge} ~.
\]
Combining both estimates and denoting $\lambda = \frac{m-n}{n}$, it follows that:
\[
L_K \leq L_{D_{\lambda n}} \brac{\frac{1}{c} 2^{\frac{1+\lambda}{\lambda}}  \sqrt{\frac{1+\lambda}{\lambda}} } ^{1+\lambda} \brac{e_{n(1+\lambda)}^\wedge}^{1+\lambda} ~.
\]
Since $L_{D_m} \simeq 1$ uniformly in $m$, the first assertion follows. 
\end{proof}

\begin{rem}
Since $e_n(K) \leq C M^*(K)$ by Sudakov's inequality (\ref{eq:Sudakov}), it follows that if we could remove the $L_K$ term from our upper bound on $M^*(K)$ given in Theorem \ref{thm:main1}, namely, if:
\[
 M^*(K) \leq C \sqrt{n} Rad(X_K) \log(1+n) \leq C' \sqrt{n} \log(1+n)^2 ~,
 \]
for any $n \geq 1$ and origin-symmetric isotropic convex $K$ in $\Real^n$, then we would obtain $L_n \leq \log(1+n)^2 C^{\sqrt{\log (\log(e+n)^2)}}$. In fact, inspecting the proof, we would obtain: 
\[
L_K\leq Rad(X_K) \log(1+n) C^{\sqrt{\log (Rad(X_K)\log(e+n))}} ~,
\]
since it is easy to verify that $Rad(X_{Q_m}) \simeq Rad(X_K)$, uniformly in $m$. 
\end{rem}

\setlength{\bibspacing}{2pt}

\vspace{-20pt}

\bibliographystyle{plain}
\bibliography{../../../ConvexBib}

 \end{document}